\newtheorem{theorem}{Theorem}[section]
\newtheorem{definition}[theorem]{Definition}
\newtheorem{proposition}[theorem]{Proposition}
\newtheorem{corollary}[theorem]{Corollary}
\newtheorem{lemma}[theorem]{Lemma}
\newtheorem{problem}[theorem]{Question}
\theoremstyle{definition}
\numberwithin{equation}{section}
\newcommand{\N}{\mathbb{N}}
\newcommand{\Z}{\mathbb{Z}}
\newcommand{\C}{\mathbb{C}}
\renewcommand{\epsilon}{\varepsilon}
\renewcommand{\phi}{\varphi}
\renewcommand{\emptyset}{\varnothing}
\DeclareMathOperator*\lowlim{\underline{lim}}
\begin{document}
\title[Recurrence and hypercyclicity in dissipative contexts]{The interplay between recurrence and hypercyclicity in dissipative contexts}
\author[D'Aniello]{E. D'Aniello}
\address[E. D'Aniello]{\mbox{}\newline \indent  Dipartimento di Matematica e Fisica, \newline\indent Universit\`a degli Studi della Campania ``Luigi Vanvitelli'', \newline\indent  Viale Lincoln n. 5,\newline\indent  81100 Caserta, Italy.}
\email{emma.daniello@unicampania.it}

\author[Maiuriello]{M. Maiuriello}
\address[M. Maiuriello]{\mbox{}\newline \indent  Dipartimento di Matematica e Fisica, \newline\indent Universit\`a degli Studi della Campania ``Luigi Vanvitelli'', \newline\indent  Viale Lincoln n. 5,\newline\indent  81100 Caserta, Italy.}
\email{martina.maiuriello@unicampania.it}

\author[Seoane]{J.B. Seoane--Sep\'ulveda}
\address[J.B. Seoane--Sep\'ulveda]{\mbox{}\newline\indent Instituto de Matem\'atica Interdisciplinar (IMI), \newline\indent Departamento de An\'{a}lisis Matem\'{a}tico y Matem\'atica Aplicada,\newline\indent Facultad de Ciencias Matem\'aticas, \newline\indent Plaza de Ciencias 3, \newline\indent Universidad Complutense de Madrid,\newline\indent 28040 Madrid, Spain.}
\email{jseoane@ucm.es}

\begin{abstract} 
Motivated by recent investigations \cite{Costakis, Bonilla} on the notion of recurrence in linear dynamics, we deepen into the notions of recurrence and frequent recurrence in the setting of dissipative composition operators with bounded distortion, a class of linear operators which includes backward shifts. 
Among other results, we show that these two notions are, actually, equivalent to those of hypercyclicity and frequent hypercyclicity, respectively. More  particularly, we improve \cite[Theorem 5.2]{Bonilla} and, also, provide an answer to an open question posed in \cite[Question 5.3]{Bonilla}.
\end{abstract}

\keywords{Hypercyclic operators, Frequently hypercyclic operators, Recurrence, Composition Operators, Weighted Shifts.}
\subjclass[2020]{Primary:  47A16, 47B33, 37B20;  Secondary: 37C50; 37B05}
\thanks{}
\maketitle

\section{Introduction and preliminaries}

Chao\-tic processes have, in the past, been usually associated to nonlinear settings. It was in 1929 that Birkhoff \cite{birkhoff1929} proved that there exists an entire function $f: \C \to \C$ whose sequence of translates
$$\{f(\, \cdot \,+ an): \, n \ge 1\}$$
(with $a \in \C \setminus \{0\}$)
approximates uniformly in compacta any entire function. This entails a rather wild dynamics for such a function $f$
under the action of a continuous {\it linear} self-mapping of \,$H(\C )$, namely, the translation operator $\tau_a g := g ( \, \cdot + a)$. Some years later, in 1952, MacLane  \cite{maclane1952} showed that the very same property also holds for some entire function $f$  under the action of the derivative operator $Dg := g'$. After these previous examples came up, and others that do not necessarily come from iterates of one
self-mapping, many analysts have invested much time and effort in studying these kinds of phenomena during the last three decades. These kind of processes started being studied under the notion of ``universality'':

\begin{definition}
Let $X$ and $Y$ be two topological spaces and $T_n:X \to Y$ $(n \in \N := \{1,2,\ldots\})$ be a sequence of continuous mappings. Then $(T_n)$ is said to be universal provided that there exists an element $x_0 \in X$, called universal for $(T_n)$, such that the orbit $\{T_nx_0: \, n \in \N\}$ of $x_0$ under $(T_n)$ is dense in $Y$.
\end{definition}

If $X$ and $Y$ are topological vector spaces and $(T_n) \subset L(X,Y)$, then the word ``universal'' is usually replaced with the term``hypercyclic'', terminology coined by Beauzamy  in \cite{beauzamy1986} and which is mainly used to designate an operator $T \in L(X)$ such that the sequence $(T^n)$ of its iterates is universal. 

Besides the notion of universality there have been a large amount of literature and evolving notions within this area, thus, let us provide a brief account of the terminology and literature that will be used in this paper. From now on, $T : X \rightarrow X$ stands for a bounded linear operator acting on a separable Banach space $X$. 

\subsection{Hypercyclicity and Recurrence. The basics}

Let us begin with some definitions and a brief background on hypercyclicity and recurrence.

\begin{definition} A vector $x \in X$ is said to be
	\begin{itemize}
		\item[(i.)] \textit{recurrent} for $T$ if there exists a strictly increasing sequence $\{n_k\}_{k \in \mathbb N}$ of positive integers such that \[T^{n_k} x \longrightarrow x \text{ as } k \longrightarrow \infty \]
		\item[(ii.)] \textit{frequently recurrent} for $T$ if, for any neighborhood $U$ of $x$, the following lower density  \[\lowlim_{N \rightarrow \infty} \frac{1}{N}\, \#\left\{1\leq n \leq N : T^{n}(x) \in U\right\}\]
		is positive
		\item[(iii.)] \textit{reiteratively recurrent} for $T$ if, for any neighborhood $U$ of $x$, the \textit{upper Banach density}  
		\[\lim_{N \rightarrow \infty} \sup_{m \geq 0} \frac{1}{N+1} \# \{m \leq n \leq m + N:T^{n}(x) \in U\}\]
		is positive.
	\end{itemize}
\end{definition}

As it has become usual in the literature, we shall denote by $R(T)$ and $FR(T)$ the set of recurrent and frequently recurrent vectors of $T$, respectively. By $RR(T)$,  we denote  the set of reiteratively recurrent vectors of $T$. 
We have the following inclusions
$$FR(T) \subseteq RR(T) \subseteq R(T).$$

\begin{definition} 
	The operator $T:X \rightarrow X$ is said to be
	\begin{itemize}
		\item[(i.)] \textit{recurrent} if the set $R(T)$ is dense in $X$;
		\item[(ii.)] \textit{frequently recurrent} if the set $FR(T)$ is dense in $X$;
		\item[(iii.)] \textit{reiteratively recurrent} if the set $RR(T)$ is dense in $X$.
	\end{itemize}
\end{definition}

\begin{definition} A vector $x \in X$ is said to be
	\begin{itemize}
		\item[(i.)]  \textit{hypercyclic} for $T$ if there exists $x$ in $X$ such that $\{T^n(x): n \in \N\}$ is dense in $X$;
		\item[(ii.)]  \textit{frequently hypercyclic} for $T$ if for each non-empty open subset $U$ of $X$ the following lower density 
		\[\lowlim_{N \rightarrow \infty} \frac{1}{N} \# \{1\leq n \leq N:T^{n}(x) \in U\}\]
		is positive; 
		\item[(iii.)]  \textit{reiteratively hypercyclic} for $T$ if for each non-empty open subset $U$ of $X$ the \textit{upper Banach density}  
		\[\lim_{N \rightarrow \infty} \sup_{m \geq 0} \frac{1}{N+1} \# \{m \leq n \leq m + N:T^{n}(x) \in U\}\]
		is positive.
	\end{itemize}
\end{definition}

\begin{definition} The operator $T:X \rightarrow X$ is said to be
	\begin{itemize}
		\item[(i.)]  \textit{hypercyclic} if $T$  admits a hypercyclic vector;
		\item[(ii.)]  \textit{frequently hypercyclic} if $T$ admits a  frequently hypercyclic vector;
		\item[(iii.)]  \textit{reiteratively hypercyclic} if $T$ admits a reiteratively hypercyclic vector.
	\end{itemize}
\end{definition}

As mentioned, hypercyclicity is a central notion in Linear Dynamics and the ``vastness'' of (frequently) hypercyclic phenomena has been recently investigated in, for instance, \cite{DAnielloMaiuriellospace, Maiuriello3,bernalcalderonetal,bernal2018}.  It is well known that, 
on separable Banach spaces, hypercyclicity is equivalent to the property that for any pair of non-empty open subsets $U$, $V$ of $X$, there is $k \in {\mathbb N}$ such that $T^{k}(U) \cap V \not= \emptyset$.  It is also known (see \cite{Costakis}) that $T$ is recurrent if for any non-empty open subset $U$ of $X$, there is $k \in {\mathbb N}$ such that $U \cap T^{-k}(U) \not= \emptyset$. Clearly, the various types of hypercyclicity  are stronger than the corresponding ones of recurrence. In \cite{GE, Maiuriello, Bonilla} the reader may find fuller and detailed discussions concerning the various notions of chaos, including different types of hypercyclicity, and recurrence. Our main aim in this paper is to 
deepen into the notions of recurrence and frequent recurrence in the framework of dissipative composition operators with bounded distortion, a class of linear operators which includes backward shifts. We shall prove, among other results, that these two notions are equivalent to,  respectively, those of hypercyclicity and frequent hypercyclicity. 

\subsection{Composition operators and weighted shifts}

\begin{definition} \label{compodyn}
	Let $(X,{\mathcal B},\mu)$ be a $\sigma$-finite measure space and $f : X \to X$ be a bijective bimeasurable transformation satisfying, together with $f^{-1}$,
	\begin{equation}\label{condition}
		\exists c>0 \ \ : \ \  \mu(f^{-1}(B)) \leq c \mu(B) \ \textrm{ for every } B \in {\mathcal B}.
		\tag{$\star$}
	\end{equation}
	For $1 \le p <\infty$, the \textit{composition operator induced by}\index{composition!operator} $f$ is the bounded linear operator 
	\begin{align*}
		T_f \colon L^p(X,{\mathcal B},\mu) &\rightarrow L^p(X,{\mathcal B},\mu) \\
		\varphi &\mapsto \varphi \circ f 
	\end{align*}
\end{definition}

We recall that the fact that $f$ is bijective and $f^{-1}$ satisfies ($\star$) imply that $T_f$ is invertible with $T_f^{-1}=T_{f^{-1}}$. The spaces $L^p(X,{\mathcal B},\mu)$ will 
be simply denoted by $L^p(X)$, $1 \leq p < \infty$. 

\begin{definition}
	Given a composition operator $T_f$, if there exists $W \in \mathcal B$, $0<\mu(W)<\infty$, such that 
	\[X = \displaystyle{ {\dot \bigcup_{k \in \Z}} f^k (W)},\]
	then $T_f$ is said to be a \textit{dissipative} composition operator.\\
	If, in addition, there exists $K>0$ such that
	\begin{equation*}
		\dfrac{1}{K} \mu(f^k(W))\mu(B) \leq \mu(f^k (B))\mu (W) \leq K \mu(f^k(W))\mu(B)
	\end{equation*}
	for every $k \in \mathbb Z$ and  $B \in {\mathcal B}(W)$, where  ${\mathcal B}(W) =\{ B \cap W, B \in {\mathcal B} \},$ then $T_f$ is said to be a dissipative composition operator of \textit{bounded distortion}.
\end{definition}

The work \cite{SinghManhas1993} contains fundamental results for the reader interested in composition operators in general contexts. In recent years, the dynamics of composition operators  have been widely investigated and understood (see, for instance, \cite{BADP, BDP, DAnielloDarjiMaiuriello, DAnielloDarjiMaiuriello2, Darjipires, Maiuriello2} and references therein), and the related 
results have been obtained in general contexts or, sometimes, in the particular case of dissipative contexts. For this case, see \cite{DAnielloDarjiMaiuriello}, where dissipative composition operators 
of bounded distortion appear for the first time.
These operators, as known from \cite{DAnielloDarjiMaiuriello}, are closely connected with weighted backward shifts. We recall that, given $\{w_i\}_{i \in \Z}$ a bounded sequence of positive numbers, 
a \textit{weighted backward shift with weights $\{w_i\}_{i \in \Z}$} is a bounded linear operator $B_w: \ell^p(\Z) \rightarrow \ell^p(\Z)$ defined by
$B_w({\bold x})(i) = w_{i+1}x_{i+1}$. Moreover, if $\{w_i\}_{i \in \Z}$ is bounded away from zero, then $B_w$ is invertible. For an exhaustive description of the dynamics of $B_w$ and their spectrum, 
see \cite{DAnielloMaiurielloSpectrum, Maiuriello}.\\

We conclude this section pointing out the close relation between $B_w$ and $T_f$. To this aim, we recall that two linear operators $T:X \rightarrow X$ and $S:Y \rightarrow Y$ are said to 
be \textit{topologically semi-conjugate} if there exists a linear, bounded, surjective map $\Pi: X \rightarrow Y$, called \textit{factor map,} for which $\Pi \circ T=S\circ \Pi$, i.e. the following diagram commutes:
{\large
\begin{figure}[H] 
	\centering
	\begin{tikzcd}
		X  \arrow[r, "T"] \arrow[d, "\Pi"]
		& X \arrow[d, "\Pi"] \\ Y  \arrow[r,  "S"]
		&Y
	\end{tikzcd} 
\end{figure} 
}
In such case, $S$ is called a \textit{factor} of $T$. In particular, if $\Pi$ is a homeomorphism, $S$ and $T$ are said to be \textit{topologically conjugate}. Throughout the paper, as usual, 
the symbol $\dot {\cup}$ denotes the disjoint union.

\begin{lemma}{\cite[Lemma 4.2.3]{DAnielloDarjiMaiuriello}}\label{factorBw}
	Let $T_f$ be a dissipative composition operator of bounded distortion, generated by a wandering set $W$. Consider the bilateral weighted backward shift $B_w$ with weights
	\[w_{k} =  \left( \frac{\mu(f^{k-1}(W))}{\mu(f^{k}(W))}\right)^{\frac{1}{p}}, \quad k \in \Z. \] 
	Then, $B_w$ is a factor of $T_f$ by the factor map $\Pi: L^p(X) \rightarrow \ell ^p({\mathbb Z}) $ defined as 
	\[\Pi(\varphi)=\left  \{ \dfrac{\mu(f^{k}(W))^{\frac{1}{p}}}{\mu(W)} \int_{W}  \varphi \circ f^{k} d \mu \right \}_{k \in \Z}  \]
\end{lemma}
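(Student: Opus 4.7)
The plan is to verify the four ingredients of a topological semi-conjugacy: $\Pi$ is linear, bounded, surjective, and satisfies $\Pi\circ T_f=B_w\circ\Pi$. Linearity is immediate from linearity of the integral. For the intertwining relation, I would compute both sides coordinate-by-coordinate: the $k$-th coordinate of $\Pi(T_f\varphi)=\Pi(\varphi\circ f)$ is
\[
\frac{\mu(f^k(W))^{1/p}}{\mu(W)}\int_W\varphi\circ f^{k+1}\,d\mu,
\]
while $(B_w\Pi(\varphi))_k=w_{k+1}\Pi(\varphi)_{k+1}$ equals the same quantity because the weights telescope: $w_{k+1}\cdot\mu(f^{k+1}(W))^{1/p}=\mu(f^k(W))^{1/p}$. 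This is precisely how the $w_k$ are calibrated.

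For boundedness, I would apply Jensen's inequality on the probability space $(W,\mu|_W/\mu(W))$ to obtain
\[
\left|\frac{1}{\mu(W)}\int_W\varphi\circ f^k\,d\mu\right|^p\le\frac{1}{\mu(W)}\int_W|\varphi\circ f^k|^p\,d\mu,
\]
and then invoke the bounded distortion hypothesis, rewritten as the push-forward estimate $(f^k)_*(\mu|_W)\le K\,(\mu(W)/\mu(f^k(W)))\,\mu|_{f^k(W)}$, which gives
\[
\int_W|\varphi|^p\circ f^k\,d\mu\le K\,\frac{\mu(W)}{\mu(f^k(W))}\int_{f^k(W)}|\varphi|^p\,d\mu.
\]
Combining the two inequalities yields $|\Pi(\varphi)_k|^p\le K\int_{f^k(W)}|\varphi|^p\,d\mu$; summing over $k$ and using the disjoint decomposition $X=\dot\bigcup_k f^k(W)$ gives $\|\Pi(\varphi)\|_{\ell^p}^p\le K\|\varphi\|_{L^p}^p$.

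Surjectivity I would prove by explicit construction. Given $a=\{a_k\}_{k\in\Z}\in\ell^p(\Z)$, set $\varphi=\sum_k c_k\chi_{f^k(W)}$ with $c_k=a_k/\mu(f^k(W))^{1/p}$. Disjointness of the $f^k(W)$'s yields $\|\varphi\|_{L^p}^p=\sum_k|c_k|^p\mu(f^k(W))=\|a\|_{\ell^p}^p<\infty$, so $\varphi\in L^p(X)$. Moreover, for $x\in W$ one has $f^k(x)\in f^k(W)$, and by disjointness $\varphi(f^k(x))=c_k$; hence $\int_W\varphi\circ f^k\,d\mu=c_k\mu(W)$ and $\Pi(\varphi)_k=c_k\mu(f^k(W))^{1/p}=a_k$. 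In fact this shows $\Pi$ is a quotient map with $\|\Pi\|\le K^{1/p}$ and a norm-one right inverse.

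The main obstacle is really the boundedness step; everything else is bookkeeping tied to the telescoping choice of weights. The delicate point is that the one-sided Koopman bound \eqref{condition} alone is insufficient to control $\|\Pi\|$ uniformly in $k$: one needs the \emph{two-sided} distortion estimate to convert $\int_W|\varphi|^p\circ f^k\,d\mu$ back to an integral over $f^k(W)$ with a constant independent of $k$, so that the summation over $k\in\Z$ telescopes into $\|\varphi\|_{L^p}^p$.
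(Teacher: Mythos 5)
Your proof is correct. The paper states this lemma as a citation of \cite[Lemma 4.2.3]{DAnielloDarjiMaiuriello} without reproducing a proof, and your argument --- telescoping weights for the intertwining, Jensen plus the lower distortion bound (as a push-forward estimate) for $\|\Pi\|\le K^{1/p}$, and simple functions constant on the $f^k(W)$ for surjectivity --- is essentially the standard proof given in that reference.
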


A detailed description of the above relation, as well as the shift-like behavior of such kind of composition operators, is provided in \cite{DAnielloDarjiMaiuriello2, Maiuriello} (and references therein). 
We point out that composition operators include weighted shifts but the class is much larger than that: it includes, for example, operators induced by measures on odometers \cite{Bongiorno}.

\section{Recent advances on recurrence}

As already recalled, hypercyclicity implies recurrence. In \cite{Bonilla}, the authors prove that the two properties, of being recurrent and being hypercyclic,  are equivalent in the context of weighted shifts. 
Specifically, they prove the following theorem (which was first proved in \cite[Proposition 5.1]{Costakis} for the Hilbert space $\ell^2(\mathbb Z)$).

\begin{theorem}\label{Bonilla-et-All}\cite[Theorem 5.2]{Bonilla}
	Let $B_w$ be a bilateral weighted shifts, with weights $w=\{w_n\}_{n \in \mathbb Z}$. If $B_w$ is recurrent, then it is hypercyclic.
\end{theorem}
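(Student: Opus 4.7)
The plan is to derive from the hypothesis that $R(B_w)$ is dense in $\ell^p(\Z)$ the Salas-type characterization of hypercyclicity for bilateral weighted shifts: the existence of a strictly increasing sequence $(n_k)$ of positive integers such that, for every $j\in\Z$,
$$\prod_{l=j+1}^{j+n_k} w_l \longrightarrow \infty \quad \text{and} \quad \prod_{l=j-n_k+1}^{j} w_l \longrightarrow 0.$$
Once this is in hand, hypercyclicity follows from the known criterion.

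First I would fix $N\in\N$ and $\epsilon>0$ and use density of recurrent vectors to pick $x\in R(B_w)$ with $\|x-\sum_{|j|\le N} e_j\|_p<\epsilon$; in particular $x_j\approx 1$ for $|j|\le N$ and $\sum_{|i|>N}|x_i|^p<\epsilon^p$. By recurrence of $x$ there is a strictly increasing sequence $(n_k)$ with $B_w^{n_k}x\to x$ in $\ell^p$. Since $B_w^{n_k}e_i=\bigl(\prod_{l=i-n_k+1}^{i}w_l\bigr)e_{i-n_k}$, the $j$-th coordinate of $B_w^{n_k}x$ is $x_{j+n_k}\prod_{l=j+1}^{j+n_k}w_l$, while its $(j-n_k)$-th coordinate is $x_j\prod_{l=j-n_k+1}^{j}w_l$. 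Coordinate-wise convergence of the orbit, combined with $x_{j\pm n_k}\to 0$ (since $x\in\ell^p$) and $x_j\approx 1$, forces, for every $|j|\le N$,
$$\prod_{l=j+1}^{j+n_k} w_l \longrightarrow \infty \qquad \text{and} \qquad \prod_{l=j-n_k+1}^{j} w_l \longrightarrow 0$$
(the second one using $|(B_w^{n_k}x)_{j-n_k}-x_{j-n_k}|\le\|B_w^{n_k}x-x\|_p\to 0$ together with $x_{j-n_k}\to 0$).

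A diagonal extraction then merges these $N$-dependent sequences into a single strictly increasing sequence $(m_N)$: for each $N$, choose $m_N$ far enough along the sequence associated to $x^{(N,1/N)}$ so that, for every $|j|\le N$, $\prod_{l=j+1}^{j+m_N} w_l > N$ and $\prod_{l=j-m_N+1}^{j} w_l < 1/N$; then $(m_N)$ verifies Salas's criterion and $B_w$ is hypercyclic. The main obstacle I anticipate is precisely this diagonal step: the sequence $(n_k)$ produced in the middle step depends on the chosen recurrent vector, and without any bounded-distortion or uniform-comparison assumption on the weights the products $\prod_{l=1}^{n}w_{j+l}$ for different $j$ need not be comparable, so one genuinely has to track the orbit at each index $j$ (hence the choice of $x$ close to $\sum_{|j|\le N} e_j$ rather than to a single $e_0$) and then patch the sequences together, rather than reducing everything to the single index $j=0$.
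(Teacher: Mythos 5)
Your proof is correct, and it is worth situating it against the paper: the paper does not actually reprove this statement (it is quoted from \cite{Bonilla}), but its proof of the generalization, Theorem \ref{thmrecurrent}, follows the same blueprint you use --- fix $N$, approximate the indicator of the central block of length $2N+1$ (there $\chi_{\dot{\cup}_{|j|\le N}f^j(W)}$, here $\sum_{|j|\le N}e_j$) by something recurrent, extract the Salas-type largeness/smallness conditions, and invoke the known characterization of hypercyclicity. The genuine difference is in how the two conditions are extracted. You read both off a \emph{single} recurrent vector's orbit: the stationary coordinate $j$ forces $\prod_{l=j+1}^{j+n_k}w_l\to\infty$ because $(B_w^{n_k}x)_j\to x_j\neq 0$ while $x_{j+n_k}\to 0$, and the moving coordinate $j-n_k$ forces $\prod_{l=j-n_k+1}^{j}w_l\to 0$, both along the same sequence $(n_k)$. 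The paper instead uses the return-set formulation of recurrence, gets one estimate from a return time $n$ of $T_f$ (its \eqref{heart}) and the other from a return time $m$ of $T_f^{-1}$ (its \eqref{hearts}, after noting that recurrence passes to the inverse), and then has to align the two times using the constants of condition $(\star)$ --- hence its choice of $\delta$ involving $(\max\{c,d\})^{|m-n|/p}$. Your single-orbit extraction avoids that alignment step entirely and is the cleaner route in the shift setting; the paper's version is the one that survives the passage to general dissipative composition operators, where coordinatewise bookkeeping is replaced by measure estimates. Your final diagonalization is also sound: for each fixed $N$ only the finitely many indices $|j|\le N$ are involved, so a single sufficiently large term $m_N$ of the sequence attached to $x^{(N,1/N)}$ handles them all, and strict monotonicity of $(m_N)$ can be imposed, after which Salas's criterion applies.
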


In the following result,  Theorem \ref{thmrecurrent}, we prove that the two properties, of being recurrent and of being hypercyclic, are equivalent for invertible dissipative composition operators $T_f$. We extend, in this way, 
Theorem \ref{Bonilla-et-All}, as each bilateral weighted shifts is (topologically conjugated to) an invertible dissipative composition operator $T_f$ (\cite{DAnielloDarjiMaiuriello2, Maiuriello}).\\
We recall that, if $T_f$ is invertible, then $T_f$ is recurrent if and only if $T_f^{-1}$ is recurrent \cite[Remark 1.5]{Costakis}.

\begin{theorem} \label{thmrecurrent}
	Let $T_f$ be a dissipative composition operator. Then, $T_f$ is hypercyclic if and only if $T_f$ is recurrent.
\end{theorem}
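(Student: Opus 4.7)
The plan is to exploit the factor relationship between $T_f$ and the associated bilateral weighted backward shift $B_w$ given by Lemma \ref{factorBw}, combined with the Bonilla--Costakis--Papathanasiou result recalled as Theorem \ref{Bonilla-et-All}, and the characterization of hypercyclicity for dissipative composition operators of bounded distortion established in \cite{DAnielloDarjiMaiuriello}.

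One direction, namely that hypercyclicity implies recurrence, is classical: any hypercyclic vector belongs to the closure of its $T_f$-orbit and is therefore recurrent, and the set of hypercyclic vectors is dense. For the nontrivial direction, I first transfer recurrence from $T_f$ to $B_w$ through the factor map $\Pi\colon L^p(X)\to \ell^p(\Z)$. Because $\Pi$ is continuous and satisfies $\Pi\circ T_f=B_w\circ \Pi$, any $\varphi\in R(T_f)$ with $T_f^{n_k}\varphi\to \varphi$ yields $B_w^{n_k}\Pi(\varphi)=\Pi(T_f^{n_k}\varphi)\to \Pi(\varphi)$, so $\Pi(\varphi)\in R(B_w)$. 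Surjectivity and continuity of $\Pi$ then promote density of $R(T_f)$ in $L^p(X)$ to density of $\Pi(R(T_f))\subseteq R(B_w)$ in $\ell^p(\Z)$, showing that $B_w$ is recurrent. At this stage Theorem \ref{Bonilla-et-All} applies and guarantees that $B_w$ is hypercyclic.

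The final step is to lift hypercyclicity from $B_w$ back to $T_f$. This step is the main obstacle, since in general a continuous factor preserves hypercyclicity only in the downward direction. Here, however, the rigidity imposed by the bounded distortion assumption forces $B_w$ to faithfully encode the dynamics of $T_f$: Salas' classical characterization of hypercyclicity for $B_w$ in terms of the weights
\[
w_k=\left(\frac{\mu(f^{k-1}(W))}{\mu(f^k(W))}\right)^{\frac{1}{p}}
\]
telescopes, after taking partial products, into a condition purely on the sequence $\{\mu(f^k(W))\}_{k\in\Z}$. The same condition, as proved in \cite{DAnielloDarjiMaiuriello}, is equivalent to hypercyclicity of $T_f$. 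Consequently, hypercyclicity of $B_w$ and of $T_f$ are equivalent, and combining the three steps concludes the proof.

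In summary, the architecture is: recurrence of $T_f$ pushes forward to recurrence of $B_w$ by the factor map; Theorem \ref{Bonilla-et-All} turns this into hypercyclicity of $B_w$; and the common measure-theoretic criterion for hypercyclicity of $B_w$ and $T_f$ lifts this back to hypercyclicity of $T_f$. The delicate point on which I expect the work to concentrate is precisely this last equivalence, where the bounded distortion hypothesis is essential to match the weighted-shift criterion with the composition-operator criterion term by term.
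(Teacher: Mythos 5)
Your overall architecture (push recurrence forward to $B_w$ via the factor map, invoke Theorem \ref{Bonilla-et-All} to get hypercyclicity of $B_w$, then transfer hypercyclicity back to $T_f$) is internally coherent, but it proves a strictly weaker statement than the theorem. Theorem \ref{thmrecurrent} is stated for an \emph{arbitrary} dissipative composition operator, with no bounded distortion hypothesis, whereas every tool in your chain requires bounded distortion: Lemma \ref{factorBw} assumes it in order for $\Pi$ to be a bounded factor map onto $\ell^p(\Z)$ intertwining $T_f$ with $B_w$, and the equivalence ``$B_w$ hypercyclic $\iff$ $T_f$ hypercyclic'' (the step you yourself flag as the delicate one) is precisely \cite[Theorem M]{DAnielloDarjiMaiuriello2}, which is likewise a bounded-distortion result. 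Without bounded distortion the measures $\mu(f^k(B))$ for $B \subseteq W$ are not controlled by $\mu(f^k(W))$, so Salas' criterion for $B_w$, which only sees the sequence $\{\mu(f^k(W))\}_{k\in\Z}$, cannot be matched with a criterion for $T_f$ ``term by term'' as you propose. In effect your argument establishes the theorem only for the bounded-distortion subclass, which is the content of the corollary following the theorem in the paper, not of the theorem itself.

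The paper's proof avoids the shift entirely and works directly in $L^p(X)$: it applies the recurrence of $T_f$ and of $T_f^{-1}$ to small balls centered at the characteristic functions $\chi_{\dot{\cup}_{\vert j \vert \leq N} f^j(W)}$, extracts from the resulting near-periodicity the bound $\mu(f^{n}(\dot{\cup}_{\vert j \vert \leq N} f^j(W))) \leq 2^p\delta^p$ together with the analogous bound for $f^{-m}$, and then uses the constants $c,d$ of condition $(\star)$ for $f$ and $f^{-1}$ to convert the exponent $-m$ into $-n$; choosing $\delta$ small relative to $c,d,n,m$ makes both $\mu(f^{n}(\dot{\cup}_{\vert j \vert \leq N} f^j(W)))$ and $\mu(f^{-n}(\dot{\cup}_{\vert j \vert \leq N} f^j(W)))$ smaller than $\epsilon$, which is exactly the sufficient condition for hypercyclicity of a dissipative $T_f$ given in \cite[Proposition 5.1]{DAnielloDarjiMaiuriello2}. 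To salvage your route in the stated generality you would need substitutes for both Lemma \ref{factorBw} and \cite[Theorem M]{DAnielloDarjiMaiuriello2} that dispense with bounded distortion, and no such substitutes are available here; the direct measure-theoretic argument is what closes that gap.
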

\begin{proof}
	If $T_f$ is hypercyclic then, clearly, $T_f$ is recurrent. We only need to prove the converse. In order to achieve this, let us assume that $T_f$ is recurrent. Let $N$ be a positive integer and consider the open ball 
	\[ B_{\delta}(\chi_{\dot{\cup}_{\vert j \vert \leq N} f^j(W)})=\{ \varphi \in L^p(X) : \Vert \chi_{\dot{\cup}_{\vert j \vert \leq N} f^j(W)} - \varphi \Vert_p < \delta \}.\] 
	There exists a positive integer $n >2N$ such that 
	 
	 \begin{equation}\label{heart}
 B_{\delta}(\chi_{\dot{\cup}_{\vert j \vert \leq N} f^j(W)}) \cap T_f^{-n}( B_{\delta}(\chi_{\dot{\cup}_{\vert j \vert \leq N} f^j(W)})) \neq \emptyset. 
	\end{equation}
	
	Moreover, as also $T_f^{-1}$ is recurrent, there exists a positive integer $m >2N$ such that  
	
		 \begin{equation}\label{hearts}
		 	 B_{\delta}(\chi_{\dot{\cup}_{\vert j \vert \leq N} f^j(W)}) \cap T_f^{m}( B_{\delta}(\chi_{\dot{\cup}_{\vert j \vert \leq N} f^j(W)})) \neq \emptyset  .
		 	\end{equation}

	Let $\epsilon >0$. Let $c$ and $d$ be the constants of condition $(\star)$ for $f$ and $f^{-1}$, respectively. As $c,d,n,m$ are fixed numbers, consider $\delta >0$ such that 
	\[\delta < \min \left \{\dfrac{\epsilon^{\frac{1}{p}}}{2}, \dfrac{\epsilon^{\frac{1}{p}}}{2 (\max\{c, d\})^{\frac{\vert m-n\vert}{p}}} \right \}.\]
	By considering \eqref{heart}, we have 
	\[ B_{\delta}(\chi_{\dot{\cup}_{\vert j \vert \leq N} f^j(W)}) \cap B_{\delta}(\chi_{\dot{\cup}_{\vert j \vert \leq N} f^j(W)} \circ f^{-n}) \neq \emptyset. \]
	Therefore, there exists $\varphi \in B_{\delta}(\chi_{\dot{\cup}_{\vert j \vert \leq N} f^j(W)}) \cap B_{\delta}(\chi_{\dot{\cup}_{\vert j \vert \leq N} f^j(W)} \circ f^{-n})$, meaning that 
	
	\[\Vert \varphi - \chi_{\dot{\cup}_{\vert j \vert \leq N} f^j(W)} \Vert_p < \delta \ \ \  \& \ \ \ \Vert \varphi - (\chi_{\dot{\cup}_{\vert j \vert \leq N} f^j(W)} \circ f^{-n}) \Vert_p < \delta.\]
	The above inequalities, together with the fact that $\chi_{\dot{\cup}_{\vert j \vert \leq N} f^j(W)}=0$ on $\dot{\cup}_{\vert j \vert \leq N} f^{n+j}(W)$ (as $n > 2N$), imply the following
	\begin{eqnarray*}
		(\mu(f^n(\dot{\cup}_{\vert j \vert \leq N} f^j(W))))^{\frac{1}{p}} &=& (\mu(\dot{\cup}_{\vert j \vert \leq N} f^{n+j}(W)))^{\frac{1}{p}} \\
		&=& \left ( \int_{\dot{\cup}_{\vert j \vert \leq N} f^{n+j}(W)} \vert \chi_{\dot{\cup}_{\vert j \vert \leq N} f^j(W)} \circ f^{-n} \vert^p  d\mu \right )^{\frac{1}{p}}\\
		&=& \left ( \int_{\dot{\cup}_{\vert j \vert \leq N} f^{n+j}(W)} \vert \chi_{\dot{\cup}_{\vert j \vert \leq N} f^j(W)} \circ f^{-n} - \chi_{\dot{\cup}_{\vert j \vert \leq N} f^j(W)}d\mu  \vert^p \right )^{\frac{1}{p}}\\
		&\leq & \left ( \int_{X} \vert \chi_{\dot{\cup}_{\vert j \vert \leq N} f^j(W)} \circ f^{-n} - \chi_{\dot{\cup}_{\vert j \vert \leq N} f^j(W)}  \vert^p d\mu \right )^{\frac{1}{p}}\\
		&=& \Vert (\chi_{\dot{\cup}_{\vert j \vert \leq N} f^j(W)} \circ f^{-n}) -  \chi_{\dot{\cup}_{\vert j \vert \leq N} f^j(W)} \Vert_p \\
		&\leq& \Vert  (\chi_{\dot{\cup}_{\vert j \vert \leq N} f^j(W)} \circ f^{-n})- \varphi \Vert_p + \Vert \varphi - \chi_{\dot{\cup}_{\vert j \vert \leq N} f^j(W)}   \Vert_p \\
		&\leq& 2\delta.
	\end{eqnarray*}
	Hence, 
	
	\begin{equation}\label{bullet1}
	\mu(f^n(\dot{\cup}_{\vert j \vert \leq N} f^j(W))) \leq 2^p\delta^p.
	\end{equation}

	By considering now \eqref{hearts} we obtain, by proceeding as above, 
	\[\mu(f^{-m}(\dot{\cup}_{\vert j \vert \leq N} f^j(W))) \leq 2^p\delta^p.\]
	Now, note that
	
	\begin{itemize}
		\item[$\bullet$] if $m <n$, then
			\begin{equation}\label{bullet2}
		 \mu(f^{-n}(\dot{\cup}_{\vert j \vert \leq N} f^j(W))) \leq c^{\vert n-m \vert } \mu(f^{-m}(\dot{\cup}_{\vert j \vert \leq N} f^j(W)))\leq  c^{\vert n-m \vert } 2^p\delta^p
	\end{equation}
		
		\item[$\bullet$] if $m \geq n$, then 
			\begin{equation}\label{bullet3}
				 \mu(f^{-n}(\dot{\cup}_{\vert j \vert \leq N} f^j(W))) \leq d^{\vert n-m \vert} \mu(f^{-m}(\dot{\cup}_{\vert j \vert \leq N} f^j(W)))\leq d^{\vert n-m \vert} 2^p\delta^p.
				\end{equation}
	\end{itemize}
	
	\noindent
	Hence, by the arbitrariness of $\epsilon$ and $N$, and by the choice of $\delta$, from \eqref{bullet1}, \eqref{bullet2} and \eqref{bullet3} it follows that 
	\[\mu(f^{n}(\dot{\cup}_{\vert j \vert \leq N} f^j(W))) < \epsilon \ \ \ \& \ \ \ \mu(f^{-n}(\dot{\cup}_{\vert j \vert \leq N} f^j(W))) < \epsilon.\]
	The above conditions, as proved in \cite[Proposition 5.1]{DAnielloDarjiMaiuriello2}, are sufficient to guarantee that $T_f$ is hypercyclic. 
	
\end{proof}

\begin{corollary}
	Let $T_f$ be a dissipative composition operator of bounded distortion. Let $B_w$ be the associated weighted backward shift, with weights 
	$$w_{k}=  \left( \frac{\mu(f^{k-1}(W))}{\mu(f^{k}(W))}\right)^{\frac{1}{p}}, \quad k \in \mathbb Z.$$ Then, $T_f$ is recurrent if and only if $B_w$ is recurrent.
\end{corollary}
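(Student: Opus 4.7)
The plan is to build the corollary as a short chain of equivalences, relying on the machinery already established in the excerpt plus one known fact from the cited literature on dissipative composition operators of bounded distortion. Explicitly, I would prove
\[
T_f \text{ recurrent} \iff T_f \text{ hypercyclic} \iff B_w \text{ hypercyclic} \iff B_w \text{ recurrent},
\]
where the outer equivalences use results already in place and the middle one is the ``transfer'' equivalence coming from the factor map $\Pi$ of Lemma \ref{factorBw}.

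For the left equivalence, I would invoke Theorem \ref{thmrecurrent} directly: under our hypotheses $T_f$ is an invertible dissipative composition operator, so recurrence and hypercyclicity coincide. For the right equivalence, one direction is trivial (hypercyclic implies recurrent) and the other is Theorem \ref{Bonilla-et-All}, applied to the bilateral weighted backward shift $B_w$ whose weights $w_k = (\mu(f^{k-1}(W))/\mu(f^k(W)))^{1/p}$ are bounded above and below thanks to the bounded distortion condition.

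The middle equivalence requires more care. In one direction, since $\Pi\circ T_f = B_w\circ\Pi$ and $\Pi\colon L^p(X)\to\ell^p(\mathbb Z)$ is continuous, linear, and surjective, any $T_f$-hypercyclic vector $\varphi$ maps to a $B_w$-hypercyclic vector $\Pi(\varphi)$; this is the standard preservation of hypercyclicity under factor maps. In the other direction, I would appeal to the explicit characterization of hypercyclicity for dissipative composition operators of bounded distortion given in \cite{DAnielloDarjiMaiuriello, DAnielloDarjiMaiuriello2} (the same criterion ``$\mu(f^{\pm n}(\dot\bigcup_{|j|\le N}f^j(W)))$ can be made arbitrarily small'' that closes the proof of Theorem \ref{thmrecurrent}); because bounded distortion forces the weights $w_k$ to be comparable to ratios of $\mu(f^k(W))$, the Salas-type hypercyclicity criterion for $B_w$ translates into exactly those measure estimates for $T_f$.

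The only nontrivial step is this ``lift from factor to total space'' in the middle equivalence, which in general fails for factor maps but succeeds here because bounded distortion forces the factor map to be, in a precise quantitative sense, a rescaled isomorphism between the wandering blocks of $T_f$ and the coordinates of $\ell^p(\mathbb Z)$. Once this is cited (rather than reproved), the corollary follows immediately by chaining the three equivalences.
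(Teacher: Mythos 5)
Your proof is correct and follows essentially the same route as the paper: a chain of equivalences $T_f$ recurrent $\iff$ $T_f$ hypercyclic (Theorem \ref{thmrecurrent}) $\iff$ $B_w$ hypercyclic $\iff$ $B_w$ recurrent (Theorem \ref{Bonilla-et-All}). The only difference is that the paper simply cites \cite[Theorem M]{DAnielloDarjiMaiuriello2} for the middle equivalence rather than sketching its proof, which is what you ultimately recommend doing as well.
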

\begin{proof}
	From Theorem \ref{thmrecurrent}, it follows that $T_f$ is recurrent if and only if it is hypercyclic. As mentioned above, the two notions are equivalent also 
	for weighted backward shifts. Moreover, by \cite[Theorem M]{DAnielloDarjiMaiuriello2}, $T_f$ is hypercyclic if and only if $B_w$ is so, from which the thesis follows.
\end{proof}

\begin{problem}\label{Q1}
In \cite[Question 5.3]{Bonilla}, the authors ask the following:  Does the analogue of Theorem \ref{Bonilla-et-All} hold for frequent recurrence? That is, if $B_w$ is frequently recurrent then, is it true that it is frequently hypercyclic?
\end{problem}

We can, actually, extend this previous question to the following one: 

\begin{problem}\label{Q2}
 What about the analogous for $T_f$ in a dissipative setting?
\end{problem}

Next, in Proposition \ref{prop1}, we provide a positive answer to Question \ref{Q1}. We first need to recall the following result. 

\begin{theorem}{\cite[Theorem 5.5]{Bonilla}} \label{thmBW}
	Let $B_w$ be a weighted backward shift on $\ell^p({\mathbb N})$ or $\ell^p({\mathbb Z})$, $1 \leq p < \infty$. If $B_w$ admits a non-zero reiteratively recurrent vector, then it is chaotic and therefore frequently hypercyclic.
\end{theorem}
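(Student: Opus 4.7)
My plan is to exploit the explicit coordinatewise action $(B_w^n x)_k = (w_{k+1}\cdots w_{k+n})\,x_{k+n}$ together with membership of $x$ in $\ell^p$ to extract the summability condition that characterizes chaos (equivalently, frequent hypercyclicity) for weighted backward shifts. Concretely, let $x$ be a non-zero reiteratively recurrent vector and pick $k_0$ with $a := x_{k_0} \neq 0$. For each $\varepsilon > 0$, the set
\[
A_\varepsilon := \{ n \geq 1 : \|B_w^n x - x\|_p < \varepsilon \}
\]
has positive upper Banach density. Taking $\varepsilon < |a|/2$ and reading off the $k_0$-th coordinate yields $(w_{k_0+1}\cdots w_{k_0+n})\,|x_{k_0+n}| \geq |a|/2$ for every $n \in A_\varepsilon$, which, combined with $x \in \ell^p$, forces the partial summability
\[
\sum_{n \in A_\varepsilon} \frac{1}{(w_{k_0+1}\cdots w_{k_0+n})^p} < \infty.
\]
A symmetric coordinatewise analysis (using, in the bilateral case, all coordinates of $x$ rather than only $k_0$) yields an analogous partial summability for the product $(w_{k_0}w_{k_0-1}\cdots w_{k_0-n+1})^p$.

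The main obstacle is to upgrade these partial summabilities to the full summability condition
\[
\sum_{n=1}^\infty \frac{1}{(w_1\cdots w_n)^p} < \infty
\]
(and, in the bilateral case, also $\sum_{n=1}^\infty (w_0 w_{-1}\cdots w_{-n+1})^p < \infty$), which is the known characterization of chaos and of frequent hypercyclicity for $B_w$ on $\ell^p(\N)$ or $\ell^p(\Z)$. Positive upper Banach density does not by itself provide bounded gaps, so more structure must be used: reiterative recurrence holds simultaneously on \emph{all} coordinates of $x$, not merely at $k_0$. By varying $k$ and $\varepsilon$ and exploiting that $A_\varepsilon$ contains a definite proportion of arbitrarily long blocks, one can compare $(w_1\cdots w_n)^{-p}$ at an arbitrary $n$ with the value attained at some nearby $n' \in A_\varepsilon$, invoking boundedness of the weights (and of $1/w_n$ where applicable) to control the multiplicative defect. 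This combinatorial-analytic bookkeeping is, to my mind, the technical heart of the proof.

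Once the full summability condition is in place, Devaney chaos is immediate: any finitely supported vector extends to a periodic point via the propagation $x_{j+k} = x_j/(w_{j+1}\cdots w_{j+k})$, so the periodic points are dense. Frequent hypercyclicity then follows either by applying the Frequent Hypercyclicity Criterion to the summable tails, or directly from the well-known equivalence between chaos and frequent hypercyclicity for weighted backward shifts on $\ell^p$.
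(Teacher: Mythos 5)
Your statement is quoted, not proved, in this paper: it is imported verbatim from \cite[Theorem 5.5]{Bonilla} and used as a black box in the proof of Proposition \ref{prop1}, so there is no internal proof to measure your attempt against; I can only judge it on its own terms. The opening move is sound: with $\varepsilon<|a|/2$ and $n\in A_\varepsilon$, reading off coordinate $k_0$ of $B_w^nx$ gives $|x_{k_0+n}|\ge |a|/\bigl(2\,w_{k_0+1}\cdots w_{k_0+n}\bigr)$, and $x\in\ell^p$ then yields $\sum_{n\in A_\varepsilon}(w_{k_0+1}\cdots w_{k_0+n})^{-p}<\infty$, with $A_\varepsilon$ of positive upper Banach density. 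The last step (full summability implies chaos implies frequent hypercyclicity for weighted shifts) is also standard and fine. But everything in between is missing, and that middle is the theorem. Upgrading summability over $A_\varepsilon$ to summability over all of $\mathbb N$ by comparing an arbitrary $n$ with a nearby $n'\in A_\varepsilon$ costs a multiplicative factor $M^{p(n'-n)}$ with $M=\sup_k w_k$, so this scheme only closes if $A_\varepsilon$ has bounded gaps (syndeticity) --- which positive upper Banach density does not give, as you yourself note. ``Varying $k$ and $\varepsilon$'' is not a substitute for the missing combinatorial input (e.g.\ propagating the return estimate along sums or differences of return times, where sets of positive upper Banach density do have usable structure such as syndetic difference sets). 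As written, the technical heart of the proof is announced rather than supplied.

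There is a second, unacknowledged gap in the bilateral case. The ``symmetric coordinatewise analysis'' is not symmetric: reading off coordinate $k_0-n$ gives $w_{k_0-n+1}\cdots w_{k_0}\,|a|\le \varepsilon+|x_{k_0-n}|$, and raising this to the power $p$ and summing over the infinite set $A_\varepsilon$ leaves a divergent $\sum_{n\in A_\varepsilon}\varepsilon^p$ contribution. So your computation does not even deliver the claimed partial summability $\sum_{n\in A_\varepsilon}(w_{k_0-n+1}\cdots w_{k_0})^{p}<\infty$; it only shows these products are eventually at most $2\varepsilon/|a|$ along $A_\varepsilon$, which is far from the decay needed for $\sum_{n\ge 1}(w_0w_{-1}\cdots w_{-n+1})^p<\infty$. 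In sum, the proposal correctly identifies the target characterization and sets up the coordinatewise extraction, but establishes neither of the two summability conditions it needs; both the density-to-full-summability upgrade and the backward-direction estimate require ideas not present in the sketch.
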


\begin{proposition} \label{prop1}
	Let $B_w$ be a bilateral weighted backward shift, with weights $w=\{w_{k}\}_{k \in \mathbb Z}$.   Then, $B_w$ is frequently recurrent if and only if $B_w$ is frequently hypercyclic.
\end{proposition}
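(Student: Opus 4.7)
The plan is to prove the two implications separately, with one being essentially immediate and the other reducing to the already-cited Theorem \ref{thmBW}.

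For the direction frequent hypercyclicity $\Rightarrow$ frequent recurrence, I would note that any frequently hypercyclic vector $x$ is automatically frequently recurrent: every neighborhood $U$ of $x$ is a non-empty open set, so by definition of frequent hypercyclicity the lower density of $\{n : B_w^n x \in U\}$ is positive. Moreover, if $x$ is frequently hypercyclic then so is every iterate $B_w^n x$, and the orbit $\{B_w^n x : n \in \N\}$ is dense in $\ell^p(\Z)$. Hence the set $FR(B_w)$ is dense, which is exactly frequent recurrence of the operator.

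For the converse, the key observation is the inclusion $FR(B_w) \subseteq RR(B_w)$ already recorded in the preliminaries: positive lower density of $\{n : B_w^n x \in U\}$ forces positive upper Banach density of the same set. Assuming $B_w$ is frequently recurrent, by definition $FR(B_w)$ is dense in $\ell^p(\Z)$, and in particular contains some non-zero vector $x_0 \neq 0$. By the inclusion just mentioned, $x_0$ is a non-zero reiteratively recurrent vector for $B_w$. Theorem \ref{thmBW} then applies and yields that $B_w$ is chaotic, hence frequently hypercyclic.

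The proof is therefore essentially a bookkeeping argument that combines the definitions with Theorem \ref{thmBW}; there is no real obstacle beyond making sure the shift from \emph{the operator is frequently recurrent} to \emph{there exists a non-zero frequently recurrent vector} is justified, which is immediate from the density of $FR(B_w)$. I would present it as a short proof: one paragraph for the trivial direction, and a second short paragraph invoking the chain $FR(B_w) \subseteq RR(B_w)$ together with Theorem \ref{thmBW} to conclude.
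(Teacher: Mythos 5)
Your proposal is correct and follows essentially the same route as the paper: both directions rest on the inclusion $FR(B_w)\subseteq RR(B_w)$ together with Theorem \ref{thmBW}, the only cosmetic difference being that the paper phrases the converse as a proof by contradiction while you argue directly (and you spell out the easy direction in slightly more detail). No gaps.
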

\begin{proof}
	As already mentioned, frequent hypercyclicity always implies frequent recurrence. Hence, we only need to show that if $B_w$ is frequently recurrent then $B_w$ is frequently hypercyclic. In order to obtain this, let us assume that $B_w$ is frequently recurrent, 
	i.e., that $FR(B_w)$ is dense in $\ell^p(\mathbb Z)$. By contradiction, let $B_w$ not be frequently hypercyclic. Hence, by Theorem \ref{thmBW}, $B_w$ cannot have non-zero reiteratively recurrent vectors. As  $FR(B_w) \subseteq RR(B_{w})$, this contradicts the fact that $FR(B_w)$ is dense in $\ell^p(\mathbb Z)$. Therefore, $B_w$ must be frequently hypercyclic.
\end{proof}

\begin{corollary} \label{cor1}
	Let $T_f$ be a dissipative composition operator of bounded distortion. Let $B_w$ be the associated weighted backward shift, with weights $w_{k}=  \left( \frac{\mu(f^{k-1}(W))}{\mu(f^{k}(W))}\right)^{\frac{1}{p}}$, ${k \in \mathbb Z}$. 
	Then, $T_f$ is frequently recurrent if and only if $B_w$ is frequently recurrent.
\end{corollary}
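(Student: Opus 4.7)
The plan is to mimic the template of the preceding corollary, substituting ``frequent'' for the unmodified notions throughout. One direction drops out of Lemma~\ref{factorBw}, while the other chains Proposition~\ref{prop1} with the frequent-hypercyclicity counterpart of the shift-equivalence \cite[Theorem M]{DAnielloDarjiMaiuriello2}.

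For the direction $T_f$ frequently recurrent $\Rightarrow$ $B_w$ frequently recurrent, I would first record the standard fact that a continuous surjective factor map preserves frequent recurrence. Given $\varphi \in FR(T_f)$ and an open neighborhood $V$ of $\Pi(\varphi) \in \ell^p(\mathbb Z)$, continuity of $\Pi$ makes $\Pi^{-1}(V)$ an open neighborhood of $\varphi$ in $L^p(X)$, and the intertwining $\Pi \circ T_f = B_w \circ \Pi$ supplied by Lemma~\ref{factorBw} gives
\[
\{n \ge 1 : B_w^{n}\Pi(\varphi) \in V\} \;=\; \{n \ge 1 : T_f^{n}\varphi \in \Pi^{-1}(V)\},
\]
which has positive lower density by hypothesis. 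Hence $\Pi(FR(T_f)) \subseteq FR(B_w)$; since $FR(T_f)$ is dense in $L^p(X)$ and $\Pi$ is continuous and surjective, $\Pi(FR(T_f))$ is dense in $\ell^p(\mathbb Z)$, and therefore $B_w$ is frequently recurrent.

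For the converse, I would invoke Proposition~\ref{prop1} to upgrade the hypothesis ``$B_w$ is frequently recurrent'' to ``$B_w$ is frequently hypercyclic''. The frequent-hypercyclicity version of the shift equivalence from \cite[Theorem M]{DAnielloDarjiMaiuriello2} (see also \cite{Maiuriello}), applied exactly as in the proof of the preceding corollary, transfers frequent hypercyclicity from $B_w$ to $T_f$. Since frequent hypercyclicity of $T_f$ trivially implies frequent recurrence of $T_f$, the converse follows.

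The main obstacle is this second direction: because $\Pi$ is surjective but noninjective, dynamical richness of the factor $B_w$ cannot in general be pulled back to $T_f$ through $\Pi$ alone. One must exploit the dissipative bounded-distortion structure directly, using the wandering decomposition $X = \dot{\bigcup}_{k \in \mathbb Z} f^{k}(W)$ to verify a Frequent Hypercyclicity Criterion for $T_f$ from the corresponding data for $B_w$ — which is precisely the shift-like equivalence already established in \cite{DAnielloDarjiMaiuriello2, Maiuriello} and which we would cite at this single point.
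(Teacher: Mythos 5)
Your proposal is correct and follows essentially the same route as the paper: one direction pushes frequently recurrent vectors forward through the factor map $\Pi$ of Lemma~\ref{factorBw} (giving $\Pi(FR(T_f))\subseteq FR(B_w)$ and then density via continuity and surjectivity of $\Pi$), and the converse chains Proposition~\ref{prop1} with the frequent-hypercyclicity equivalence of \cite[Theorem M]{DAnielloDarjiMaiuriello2}. The paper attributes the factor-map step to the argument of \cite[Proposition 1.19]{GE}, but your explicit computation of the return-time set is exactly that argument.
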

\begin{proof}
	Firstly, suppose that $B_w$ is frequently recurrent. Then, by Proposition \ref{prop1}, $B_w$ is frequently hypercyclic. This, by \cite[Theorem M]{DAnielloDarjiMaiuriello2} is equivalent to have $T_f$ 
	frequently hypercyclic, and therefore $T_f$ frequently recurrent.
	
	Secondly, assume now that $T_f$ frequently recurrent. This means that $FR(T_f)$ is dense in $L^p(X)$, i.e. $\overline{FR(T_f)}=L^p(X)$. Moreover, as $T_f$ and $B_w$ are semiconjugate via $\Pi: L^p(X) \rightarrow \ell^p(\mathbb Z)$, by arguing as in  \cite[Proposition 1.19]{GE}, it follows that if $\varphi$ is a frequently recurrent vector of $T_f$, then $\Pi(\varphi)$ is a frequently recurrent vector for $B_w$. Then, $\Pi(FR(T_f)) \subseteq FR(B_w)$. We recall that $\Pi$ is surjective and, hence, $\Pi(L^p(X))= \ell^p(\mathbb Z)$. Therefore, 
	\[ \ell^p(\mathbb Z)=\Pi(L^p(X))=\Pi(\overline{FR(T_f)}) \subseteq \overline{\Pi(FR(T_f))} \subseteq \overline{FR(B_w)} \subseteq \ell^p(\mathbb Z)\]
	implying  $\ell^p(\mathbb Z)= \overline{FR(B_w)}$. That is, $B_w$ is frequently recurrent.
\end{proof}

By using the above Corollary \ref{cor1}, we obtain the following theorem, which provides an answer to Question \ref{Q2}, and extends Proposition \ref{prop1} . 

\begin{theorem}
Let $T_f$ be a dissipative composition operator of bounded distortion. Then, $T_f$ is frequently recurrent if and only if $T_f$ is frequently hypercyclic.
\end{theorem}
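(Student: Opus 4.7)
The plan is to chain together the three equivalences that have just been established in the excerpt. The forward implication (frequent hypercyclicity always implies frequent recurrence) is immediate from the definitions, since any frequently hypercyclic vector is in particular frequently recurrent; so I only need to argue the converse.

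For the converse, I would start from the assumption that $T_f$ is frequently recurrent and use Corollary \ref{cor1} to transfer this property to the associated bilateral weighted backward shift $B_w$ with weights $w_k = (\mu(f^{k-1}(W))/\mu(f^k(W)))^{1/p}$. Thus $B_w$ is frequently recurrent on $\ell^p(\mathbb{Z})$. Next, I would apply Proposition \ref{prop1}, which was proved via Theorem \ref{thmBW} together with the inclusion $FR(B_w) \subseteq RR(B_w)$, to upgrade frequent recurrence of $B_w$ to frequent hypercyclicity of $B_w$. Finally, I would invoke \cite[Theorem M]{DAnielloDarjiMaiuriello2} to pull frequent hypercyclicity of $B_w$ back to frequent hypercyclicity of $T_f$, exactly as was done in the first half of the proof of Corollary \ref{cor1}.

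In summary, the proposed chain of equivalences is
\[
T_f \text{ freq.\ rec.} \;\stackrel{\text{Cor.\ \ref{cor1}}}{\iff}\; B_w \text{ freq.\ rec.} \;\stackrel{\text{Prop.\ \ref{prop1}}}{\iff}\; B_w \text{ freq.\ hyp.} \;\stackrel{\text{\cite[Thm M]{DAnielloDarjiMaiuriello2}}}{\iff}\; T_f \text{ freq.\ hyp.}
\]
and the theorem follows. There is no real obstacle here beyond correctly citing the three earlier results; the conceptual content has already been absorbed into Corollary \ref{cor1} (which encapsulates the semiconjugacy argument via the factor map $\Pi$) and Proposition \ref{prop1} (which is the shift version of the statement, based on Theorem \ref{thmBW} and the inclusion $FR \subseteq RR$). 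The only mild subtlety worth double-checking is that the implication ``$B_w$ frequently hypercyclic $\Rightarrow$ $T_f$ frequently hypercyclic'' does hold, not only the opposite direction; this is guaranteed by the biconditional form of \cite[Theorem M]{DAnielloDarjiMaiuriello2} that has already been used in Corollary \ref{cor1}, so no new work is required.
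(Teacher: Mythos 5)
Your proposal is correct and follows essentially the same route as the paper: the paper's proof is exactly the chain $T_f$ frequently recurrent $\Leftrightarrow$ $B_w$ frequently recurrent (Corollary \ref{cor1}) $\Leftrightarrow$ $B_w$ frequently hypercyclic (Proposition \ref{prop1}) $\Leftrightarrow$ $T_f$ frequently hypercyclic (Theorem M of the cited work). No differences worth noting.
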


\begin{proof}
	By Corollary \ref{cor1}, $T_f$ is frequently recurrent if and only if $B_w$ is frequently recurrent. This is equivalent, by Proposition \ref{prop1}, to have $B_w$ frequently hypercyclic. 
	By \cite[Theorem M]{DAnielloDarjiMaiuriello2}, this is equivalent to $T_f$ frequently hypercyclic.
\end{proof}

\bibliographystyle{siam}
\bibliography{biblio}


%
%
\end{document}